\newtheorem{thm}{Theorem}[section]
\newtheorem*{fscriterion}{Fiber Sum Criterion}
\newtheorem{theorem}[thm]{Theorem}
\newtheorem{lemma}[thm]{Lemma}
\newtheorem{proposition}[thm]{Proposition}
\theoremstyle{definition}
\newcommand{\Z}{\mathbb{Z}}
\newcommand{\nc}{\newcommand}
\nc{\dmo}{\DeclareMathOperator}
\dmo{\MCG}{Mod}
\dmo{\Diff}{Diff}
\dmo{\Dehn}{Dehn}
\dmo{\I}{\mathcal{I}}
\dmo{\Sp}{Sp}
\dmo{\PB}{PB}
\dmo{\B}{B}
\dmo{\D}{D}
\nc{\Push}{\mathcal{P}ush}
\nc{\N}{\mathcal{N}}
\nc{\K}{\mathcal{K}}
\dmo{\Out}{Out}
\dmo{\UT}{UT}
\nc{\margin}[1]{\marginpar{\scriptsize #1}}
\nc{\p}[1]{\medskip\paragraph{{\bf #1.}}}
\begin{document}

\title[Lefschetz fibrations and Torelli groups]
{Lefschetz fibrations and Torelli groups}

\author[R. \.{I}. Baykur]{R. \.{I}nan\c{c} Baykur}
\address{Max Planck Institute for Mathematics, Bonn, Germany \newline
\indent Department of Mathematics, Brandeis University, Waltham, MA, USA}
\email{baykur@mpim-bonn.mpg.de, baykur@brandeis.edu}

\author[Dan Margalit]{Dan Margalit}
\address{School of Mathematics, Georgia Institute of Technology, Atlanta, GA, USA}
\email{margalit@math.gatech.edu}

\begin{abstract}
For each $g \geq 3$ and $h \geq 2$, we explicitly construct \linebreak (1) fiber sum indecomposable relatively minimal genus $g$ Lefschetz fibrations over genus $h$ surfaces whose monodromies lie in the Torelli group, (2) fiber sum indecomposable genus $g$ surface bundles over genus $h$ surfaces whose monodromies are in the Torelli group (provided $g \geq 4$), and (3) infinitely many genus $g$ Lefschetz fibrations over genus $h$ surfaces that are not fiber sums of holomorphic ones. 
\end{abstract}

\thanks{The first author was partially supported by the NSF grant DMS-0906912. The second author was partially supported by an NSF CAREER grant and a fellowship from the Sloan Foundation.}

\maketitle

\setcounter{secnumdepth}{2}
\setcounter{section}{0}

\section{Introduction}

Given a Lefschetz fibration $f\colon X \to \Sigma$ on a closed oriented \linebreak $4$-manifold $X$ with a regular fiber $F$ and critical locus $\text{Crit}(f)$, there is an associated monodromy homomorphism $\mu$ from $\pi_1(\Sigma \setminus \text{Crit}(f))$ to the mapping class group of $F$, which determines the topology of the pair $(X,f)$. A more restricted but still useful piece of information is the image of $\mu$, the monodromy group of $f$. For example, there are various restrictions on the topology of a Lefschetz fibration whose monodromy group lies in the hyperelliptic mapping class group; see \cite{EndoHyperelliptic, SiebertTianHyperelliptic}.  The purpose of this article is to study Lefschetz fibrations with monodromy group in the Torelli group, the kernel of the action of the mapping class group of $F$ on $H_1(F;\Z)$ (Theorems~\ref{mainthm1} and~\ref{mainthm3}). As a byproduct we will construct families of Lefschetz fibrations that are not fiber sums of holomorphic ones (Theorem~\ref{mainthm2}).

In addition to having monodromy in the Torelli group, the Lefschetz fibrations we construct will have the additional property of fiber sum indecomposability: whenever we write them as a fiber sum of two Lefschetz fibrations, one is a trivial bundle over $S^2$.  Such fibrations can be regarded as the irreducible building blocks of Lefschetz fibrations.  

In what follows, we reserve the term Lefschetz fibration for the case where the critical locus is nonempty, so as to distinguish this from the surface bundle case.

\begin{thm} \label{mainthm1}
For each pair of integers $g \geq 3$ and $h \geq 2$, there are fiber sum indecomposable relatively minimal genus $g$ Lefschetz fibrations over genus $h$ surfaces whose monodromies lie in the Torelli group.
\end{thm}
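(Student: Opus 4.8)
The plan is to build the desired Lefschetz fibrations by hand from explicit products of Dehn twists about separating curves, since a single Dehn twist about a separating curve lies in the Torelli group but is not trivial in homology. First I would fix a genus $g$ surface $F$ with $g \geq 3$ and choose a collection of separating simple closed curves $c_1, \dots, c_n$ on $F$ whose associated Dehn twists $T_{c_1}, \dots, T_{c_n}$ satisfy a positive relator–type identity, or at least a relation allowing the word $T_{c_1} \cdots T_{c_n}$ to be completed to the monodromy of a Lefschetz fibration over a genus $h$ surface. Concretely, one realizes $\pi_1$ of a genus $h$ surface as generated by $a_1, b_1, \dots, a_h, b_h$ with the surface relation $\prod [a_i, b_i] = 1$, and one must produce a homomorphism to $\MCG(F)$ whose image lies in the Torelli group $\I(F)$, together with vanishing cycles (here the separating $c_j$) so that the total monodromy around all the critical values and the handles multiplies to the identity. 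Because separating twists generate $\I(F)$ for $g \geq 3$ (Johnson), there is plenty of room to choose the $a_i, b_i$ images freely in $\I(F)$ and then absorb the correction term into a product of separating twists, which become the Lefschetz critical points.

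The key steps, in order, are: (1) write down the explicit word in separating Dehn twists and the explicit $\I(F)$-valued images of the surface-group generators, and verify the surface relation holds in $\MCG(F)$, so that a genuine Lefschetz fibration $f\colon X \to \Sigma_h$ exists with nonempty critical locus; (2) check relative minimality, i.e.\ that no vanishing cycle bounds a disk — automatic here since every $c_j$ is an essential separating curve; (3) check that the monodromy group lies in $\I(F)$ — immediate from the construction since all generators and all vanishing cycles map into $\I(F)$; and (4) establish fiber sum indecomposability. For step (4) I would use a criterion (the Fiber Sum Criterion alluded to in the theorem environments) detecting indecomposability from the monodromy factorization — typically one shows that the vanishing cycles cannot be partitioned into two sub-collections each of which, together with a sub-surface-group, closes up to a Lefschetz fibration, unless one side is the trivial $S^2$-bundle. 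A clean way to force this is to use very few separating vanishing cycles (ideally a single one, or a small number whose homology classes in $H_1(F;\Z/2)$ or in the abelianization of $\I(F)$, detected by the Birman–Craggs–Johnson homomorphism, obstruct any nontrivial splitting).

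The main obstacle I expect is step (4), fiber sum indecomposability, since steps (1)–(3) are essentially bookkeeping once the right word is chosen. The difficulty is that fiber sum decompositions over a higher-genus base are subtler than over $S^2$: a decomposition corresponds to cutting the base $\Sigma_h$ along a separating curve and distributing the critical points and genus between the two pieces, so one must rule out all such cuts. I anticipate the argument invokes an algebraic invariant of the monodromy factorization that is additive under fiber sum — for instance the image of the product of the separating vanishing cycles under the Johnson homomorphism or the Birman–Craggs–Johnson homomorphism, or a count weighted by such — and shows that any nontrivial splitting would have to distribute this invariant in an impossible way, unless one factor is the empty word over $S^2$, i.e.\ the trivial bundle. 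A secondary subtlety is ensuring the construction works uniformly for all $h \geq 2$ (and not just $h=2$): one would handle the extra handles by mapping the additional generators $a_i, b_i$ for $i \geq 2$ to suitably chosen commuting elements of $\I(F)$, or even to the identity, so that they contribute no new critical points and do not interfere with the indecomposability invariant.
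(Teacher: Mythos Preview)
Your proposal has the right overall architecture---build an explicit monodromy factorization in $\I(\Sigma_g)$ and then verify the Fiber Sum Criterion---but there are two genuine gaps, one in each of the steps you flag as hardest.

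First, step (1) is not bookkeeping: you need an \emph{explicit} relation in $\I(\Sigma_g)$ expressing a product of positive separating twists as a product of commutators, and you have not indicated where such a relation comes from. (Your parenthetical ``separating twists generate $\I(F)$ for $g\geq 3$ (Johnson)'' is in fact false---they generate only the Johnson kernel $\K(\Sigma_g)$, an infinite-index subgroup---so there is no soft argument here.) The paper's key idea is to use the point-pushing subgroup: the Birman exact sequence embeds $\pi_1(\UT(\Sigma_{g_0}))$ into $\MCG(\Sigma_{g_0}^1)$ landing in the Torelli group, and the single relation $[\widetilde\alpha_1,\widetilde\beta_1]\cdots[\widetilde\alpha_{g_0},\widetilde\beta_{g_0}]=t^{2g_0-2}$ in $\pi_1(\UT(\Sigma_{g_0}))$ becomes exactly the desired identity $T_c^{2g_0-2}=\prod[\Push(\widetilde\beta_i)^{-1},\Push(\widetilde\alpha_i)^{-1}]$. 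Taking $g_0=2$ gives the $h=2$ monodromy factorization for free.

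Second, and more seriously, your plan for both fiber sum indecomposability and for $h\geq 3$ would fail. The Fiber Sum Criterion says the fibration is decomposable if and only if some essential separating simple closed curve in the base lies in the kernel of the monodromy. If you ``handle the extra handles by mapping the additional generators $a_i,b_i$\dots to the identity,'' then the boundary of that handle is exactly such a curve, so the resulting fibration is automatically fiber sum \emph{decomposable}. The paper instead passes from $h=2$ to $h\geq 3$ by pulling back along a covering $\Sigma_h\to\Sigma_2$, and proves indecomposability not via any additive invariant like the Johnson or Birman--Craggs--Johnson homomorphisms, but by directly analyzing simple closed curves: the monodromy factors as $\pi_1(\Sigma_{2,2})\xrightarrow{\eta}\pi_1(\UT(\Sigma_2))\hookrightarrow\I(\Sigma_g)$, and one checks that every essential simple separating loop in $\Sigma_{2,2}$ either survives in $\pi_1(\Sigma_2)$ or maps to a nonzero power of the central element $t$, hence is never in $\ker\eta$. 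This is a topological argument about curves on the base, not an algebraic invariant of the vanishing cycles, and it is what makes the proof go through.
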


Theorem~\ref{mainthm1} is proven by explicitly constructing the desired Lefschetz fibrations. In the case $h=2$ these fibrations are prescribed by the monodromy factorizations 
\[ T_{c}^2 \, [T_{\alpha_1^-}T_{\alpha_1^+}^{-1} , T_{\beta_1^-}T_{\beta_1^+}^{-1} ] [T_{\alpha_2^-}T_{\alpha_2^+}^{-1} , T_{\beta_2^-}T_{\beta_2^+}^{-1} ] = 1
\] 
where where $T_x$ denotes the right Dehn twist about $x$ and the curves $c, \alpha_j^+, \alpha_j^-, \beta_j^+, \beta_j^-$ are as in Figure~\ref{figure:pushrelation} (to realize the curves in a surface of genus $g \geq 3$, glue a genus $g-2$ surface along $c$).  Our conventions here and throughout the paper are that $[a,b]$ denotes $aba^{-1}b^{-1}$ and that mapping classes are applied right to left.  Our fibrations for $h \geq 3$ are then obtained by taking appropriate pullbacks of these fibrations.

\begin{figure}
\labellist
\small\hair 2pt
\pinlabel {$\alpha_1^-$} [] at 95 150 
\pinlabel {$\alpha_1^+$} [] at 95 110 
\pinlabel {$\beta_1^-$} [] at 265 150 
\pinlabel {$\beta_1^+$} [] at 305 150 
\pinlabel {$\alpha_2^+$} [] at 75 60 
\pinlabel {$\alpha_2^-$} [] at 75 25 
\pinlabel {$\beta_2^+$} [] at 227 20 
\pinlabel {$\beta_2^-$} [] at 274 40
\pinlabel {$c$} [] at 173 160
\endlabellist
\centering \includegraphics[scale=.9]{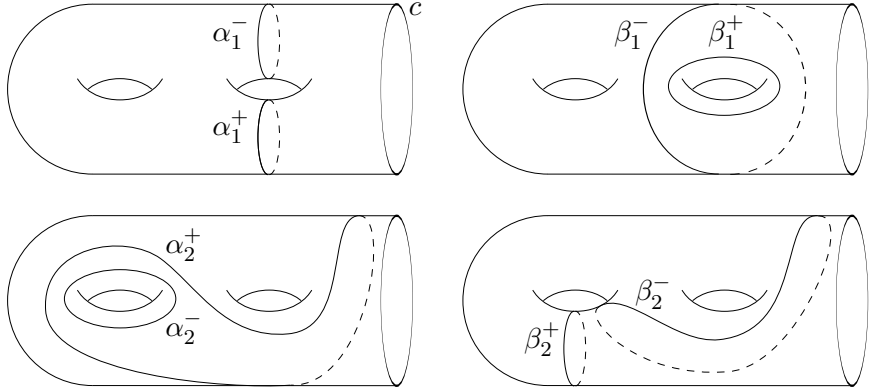}
\caption{The curves used in Theorem~\ref{mainthm1}.}
\label{figure:pushrelation}
\end{figure}

The fiber sum indecomposability of our fibrations alone contrasts with the situation for the lower genera fibrations: as shown by Matsumoto in \cite{Matsumoto1}, any genus one Lefschetz fibration is fiberwise diffeomorphic to the fiber sum of standard ones, namely the elliptic surfaces $E(n)$ and the trivial surface bundle over a genus $h$ surface. 

The hypotheses on the fiber and base genera in Theorem~\ref{mainthm1} are necessary: as we show in Proposition~\ref{SmallGenera}, there are no relatively minimal Lefschetz fibrations with monodromy group contained in the Torelli group if $g \leq 2$ or $h \leq 1$; the $h=0$ case is due to Smith \cite{SmithTorelli, ABKPTorelli}. Relative minimality is also a necessary assumption in Theorem~\ref{mainthm1}, for otherwise one can simply blow up on a fiber of any trivial surface bundle over a surface to produce such examples.

In the course of the proof of Theorem~\ref{mainthm1} we prove that the commutator length in the Torelli group (or the mapping class group) of a Dehn twist about a curve cutting off a genus 2 subsurface with one boundary component is 2.  This result is proved in the same spirit as in \cite{KorkmazOzbagciCL, KorkmazCL, EndoKotschickCL, BaykurKorkmazMonden}, where similar calculations were carried out for other powers of Dehn twists in the whole mapping class group. 

For completeness, we also give a version of Theorem~\ref{mainthm1} for surface bundles over surfaces.

\begin{thm} \label{mainthm3}
For each pair $g \geq 4$ and $h \geq 2$, there are fiber sum indecomposable genus $g$ surface bundles over genus $h$ surfaces whose monodromy groups lie in the Torelli group. 
\end{thm}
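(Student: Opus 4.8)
The plan is to mimic the proof of Theorem~\ref{mainthm1}: produce an explicit genus $g$ monodromy factorization consisting purely of commutators of Torelli elements for a single small base genus, and then pass to larger base genera by pullback. A genus $g$ surface bundle over $\Sigma_h$ is the same datum as a homomorphism $\rho\colon \pi_1(\Sigma_h)\to\MCG(\Sigma_g)$ (up to conjugacy), and its monodromy group lies in $\I(\Sigma_g)$ exactly when $\rho$ factors through the Torelli group; so the task is to exhibit $a_1,b_1,\dots,a_h,b_h\in\I(\Sigma_g)$ with $\prod_{i=1}^{h}[a_i,b_i]=1$ for which the associated bundle admits no decomposing curve in the sense of the Fiber Sum Criterion. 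For the base case $h=2$ I would simply take $a_1=b_2=X$ and $b_1=a_2=Y$ for a suitable pair $X,Y\in\I(\Sigma_g)$, so that $[a_1,b_1][a_2,b_2]=[X,Y][Y,X]=1$ holds identically; the resulting bundle is visibly non-trivial, since its monodromy group $\langle X,Y\rangle$ is non-trivial, and the only thing left to check for this case is fiber sum indecomposability. (An alternative source of such a relation, which also explains the jump from $g\geq 3$ to $g\geq 4$, is to place two copies of the Theorem~\ref{mainthm1} relation $T_c^2\,[X_1,Y_1][X_2,Y_2]=1$ on the two sides of a separating curve $c$ cutting $\Sigma_g$ into genus $2$ pieces, and substitute one into the other so that the factors $T_c^{\pm2}$ cancel, leaving a product of commutators of bounding pair maps equal to the identity; this version naturally lives over a genus $4$ base.)

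For the generators I would use the bounding pair maps of Figure~\ref{figure:pushrelation}, for instance $X=T_{\alpha_1^-}T_{\alpha_1^+}^{-1}$ and $Y=T_{\beta_1^-}T_{\beta_1^+}^{-1}$, realized inside $\Sigma_g$ for $g\geq 4$. These lie in $\I(\Sigma_g)$ because $\alpha_1^{\pm}$ (resp.\ $\beta_1^{\pm}$) are disjoint homologous nonseparating curves. The role of the hypothesis $g\geq 4$ is that for $g$ this large one can arrange $X$ and $Y$ to be sufficiently independent — e.g.\ so that $\langle X,Y\rangle$ is free of rank two, or at least so that the pair is far from commuting along every geometric symplectic splitting of $\pi_1(\Sigma_2)$, as detected on $H_1$ of the fiber via the Johnson homomorphism. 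In contrast to Theorem~\ref{mainthm1}, there are now no vanishing cycles available to obstruct a decomposition cheaply, so this non-degeneracy of $X,Y$ must do all of the work; it is not available below $g=4$, and below $g=3$ or $h=1$ the Torelli condition is already impossible by Proposition~\ref{SmallGenera}.

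The heart of the argument, and the step I expect to be the main obstacle, is fiber sum indecomposability. Because our bundle has no critical points, any fiber sum decomposition has both summands critical-point-free, so a summand over $S^2$ is forced to be a trivial bundle; the only genuine decomposition to rule out is over $\Sigma_2=\Sigma_1\#\Sigma_1$. By the Fiber Sum Criterion this occurs precisely when $\Sigma_2$ carries an essential separating simple closed curve $\gamma$, splitting it into two one-holed tori, with $\rho(\gamma)=1$; since $\rho(\gamma)$ is the commutator of the $\rho$-images of a symplectic pair cutting out $\gamma$, indecomposability amounts to the statement that for every geometric symplectic basis pair $(u,v)$ of $\pi_1(\Sigma_2)$ the elements $\rho(u),\rho(v)\in\langle X,Y\rangle$ fail to commute. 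The ``obvious'' splitting gives $[\rho(a_1),\rho(b_1)]=[X,Y]\neq 1$, but since $\ker\rho$ is not $\MCG(\Sigma_2)$-invariant one must rule out all the other splittings too; this is exactly where the careful choice of $X$ and $Y$ enters, via an analysis of which elements of $\pi_1(\Sigma_2)$ can lie in $\ker\rho$ (again using the action on $H_1$ of the fiber), and where the bound $g\geq 4$ is needed. A parallel check against the Section Sum Criterion disposes of any remaining potential splittings.

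Finally, for base genus $h\geq 3$ I would obtain the required bundles, as in Theorem~\ref{mainthm1}, by pulling back the $h=2$ bundle along a suitable branched covering $\Sigma_h\to\Sigma_2$. The monodromy group is unchanged under pullback, so the Torelli property persists automatically; and one checks, once more via the Fiber Sum Criterion and using that the covering is honest so that no generator of $\pi_1(\Sigma_h)$ is killed by $\rho$, that fiber sum indecomposability is inherited by the pullback.
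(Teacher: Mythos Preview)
Your proposal identifies the right target---a homomorphism $\rho:\pi_1(\Sigma_h)\to\I(\Sigma_g)$ whose kernel misses every essential separating simple loop---but it does not actually establish this. You correctly flag fiber sum indecomposability as ``the heart of the argument, and the step I expect to be the main obstacle,'' and then defer it to an unspecified ``analysis of which elements of $\pi_1(\Sigma_2)$ can lie in $\ker\rho$.'' That analysis \emph{is} the theorem. For your base-case monodromy $a_1,b_2\mapsto X$, $b_1,a_2\mapsto Y$, the kernel is the (infinitely generated, free) normal closure of $a_1b_2^{-1}$ and $b_1a_2^{-1}$ in $\pi_1(\Sigma_2)$; ruling out a separating simple loop here means showing that for \emph{every} geometric symplectic pair $(u,v)$ the images $\rho(u),\rho(v)$ have no common root in $F_2$. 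You give no mechanism for this. Your suggested tool---``the action on $H_1$ of the fiber''---is unavailable precisely because the monodromy is Torelli, and invoking the Johnson homomorphism does not obviously help either, since you would still need to control its values on all of $\MCG(\Sigma_2)\cdot[a_1,b_1]$. Your justification of the bound $g\ge4$ is likewise only heuristic: the bounding-pair maps $X,Y$ you write down already generate a free group for $g\ge3$, so whatever forces $g\ge4$ in your scheme remains unarticulated.

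The paper avoids this entire kernel analysis by building an \emph{injective} monodromy, so the Fiber Sum Criterion is satisfied vacuously. Concretely, it factors $\pi_1(\Sigma_h)$ through a right-angled Artin group and then a braid group: one composes the injection $\Phi_{h,g+1}:\pi_1(\Sigma_h)\hookrightarrow A(\overline C_{g+1})$ (valid for $g+1\ge5$, which is the true source of $g\ge4$) with a map $\Psi'_{g+1,p}:A(\overline C_{g+1})\to\B_{2g+1}$ sending each Artin generator to a high power of a Dehn twist about a curve enclosing three marked points, and then with the standard $\Omega_g:\B_{2g+1}\to\MCG(\Sigma_g)$. Injectivity of $\Psi'$ is checked by forgetting alternate strands to recover the known-injective $\Psi_{g+1,4p}$, and the image lands in $\I(\Sigma_g)$ because squares of twists about odd numbers of marked points become separating twists upstairs. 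Pullback to higher base genus is then via an honest (unbranched) cover, preserving injectivity. This is a genuinely different route from yours, and the payoff is that indecomposability comes for free rather than requiring a delicate case analysis of simple separating curves.
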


The proof of Theorem~\ref{mainthm3} is by explicit construction.  We give the first explicit examples of surface subgroups of the Torelli group, although nonexplicit examples are readily available \cite{CLM,Koberda,CrispFarb}. It is easy to generate explicit examples for $h=1$: any two Dehn twists along distinct, disjoint essential separating curves generate a subgroup of the Torelli group isomorphic to $\pi_1(T^2) \cong \Z^2$.

It is certainly necessary in Theorem~\ref{mainthm3} that $g \geq 3$, since the Torelli group is trivial for $g=1,2$ and is a free group for $g=2$. It is not immediately clear from our work if the $g=3$ case is achievable.  

When stripped of the fiber sum indecomposability property, the existence of the Lefschetz fibrations and surface bundles in Theorems~\ref{mainthm1} and~\ref{mainthm3} is no surprise. The trivial surface bundles already have monodromy in the Torelli group, and nontrivial examples are very easy to generate. As for Lefschetz fibrations, it is known that the commutator subgroup of the Torelli group has finite index in the subgroup generated by Dehn twists about separating curves \cite{Johnson}, and so some high enough power of a product of Dehn twists about separating curves is equal to a product of commutators in the Torelli group. Nevertheless, this fact does not provide any explicit information on the number of commutators.  Therefore, it does not allow us to explicitly realize all pairs of $g$ and $h$ as in our theorems.

When $g \geq 9$, Theorem~\ref{mainthm3} is a special case of another theorem we prove, namely, that there exist fiber sum indecomposable surface bundles over surfaces whose monodromy groups lie in any given term of the Johnson filtration; see Section~\ref{Sec: SBconstruction}. Again, we give explicit examples; the existence of such bundles was known to Crisp and Farb \cite{CrispFarb}.

\smallskip
We now turn to the following question: \emph{Is every Lefschetz fibration a fiber sum of holomorphic Lefschetz fibrations?} This question arose because the first examples of Lefschetz fibrations over the $2$-sphere that could not be made holomorphic for any choice of complex structures on the total space and the base were obtained by fiber summing holomorphic Lefschetz fibrations with twisted gluings \cite{OzbagciStipsiczHolomorphic, FintushelSternHolomorphic, KorkmazHolomorphic}. For Lefschetz fibrations over $S^2$, the question was settled negatively by Stipsicz \cite{StipsiczIndecomposable} and Smith \cite{SmithIndecomposable} independently. By modifying the Lefschetz fibrations we obtained in Theorem~\ref{mainthm1} we extend their result to Lefschetz fibrations over surfaces of higher genera.

\begin{thm} \label{mainthm2}
For each pair $g \geq 3$ and $h \geq 2$, there are relatively minimal genus $g$ Lefschetz fibrations over genus $h$ surfaces on infinitely many pairwise homotopy inequivalent $4$-manifolds that are not fiber sums of holomorphic Lefschetz fibrations.  
\end{thm}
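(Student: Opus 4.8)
The plan is to start from the Lefschetz fibrations $f\colon X \to \Sigma_h$ constructed in Theorem~\ref{mainthm1}, whose monodromy factorizations consist of one positive Dehn twist $T_c^2$ about a separating curve together with a product of commutators in the Torelli group, and to modify them to produce infinitely many mutually non-homotopy-equivalent total spaces while retaining enough structure to obstruct holomorphicity after fiber summing. The standard engine for producing infinitely many distinct smooth (hence homotopy) types from a single Lefschetz fibration is a \emph{Fintushel--Stern knot surgery} along a homologically essential torus, or a logarithmic transform; here I would prefer to build such a torus into the construction directly. Concretely, since $c$ cuts off a genus-$2$ piece and the rest of the monodromy lives in the Torelli group, one can arrange a $T^2$-fiber region or a section-neighborhood that is a plumbing carrying a smoothly embedded torus $T$ of square zero with $\pi_1(X \setminus T) = 1$; performing knot surgery with knots $K_n$ having distinct Alexander polynomials yields $4$-manifolds $X_n$ with pairwise distinct Seifert--Witten invariants, and by a theorem of Fintushel--Stern the Lefschetz fibration structure persists (the torus can be chosen to sit inside a fiber, or one uses the fiber-sum description of knot surgery). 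This gives the infinitude and the homotopy-inequivalence.

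Next I would establish that none of the resulting $X_n \to \Sigma_h$ is a fiber sum of holomorphic Lefschetz fibrations. The mechanism, following Stipsicz \cite{StipsiczIndecomposable} and Smith \cite{SmithIndecomposable}, is a signature/Euler-characteristic or Seiberg--Witten argument: a fiber sum of holomorphic Lefschetz fibrations is symplectic and, crucially, satisfies strong constraints — for instance its canonical class behaves predictably, or (in Smith's approach) a fiber sum of nontrivial holomorphic fibrations has all its Seiberg--Witten basic classes controlled by the fiber and the positivity of the canonical class. Since $X_n$ was obtained by knot surgery with a knot whose Alexander polynomial has a coefficient violating the relevant inequality (e.g. the Fintushel--Stern criterion that forces a basic class pairing outside the allowed range, exactly as in the $S^2$-base case), $X_n$ cannot be such a fiber sum. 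I would need to check that the higher base genus does not disturb this: the key point is that a fiber sum $X' \#_F X''$ over $\Sigma_h$ with both pieces nontrivial holomorphic fibrations still has a symplectic canonical class evaluating positively on fiber and sections, and that knot surgery with a suitable $K_n$ destroys this; alternatively one shows directly that $X_n$ is not even symplectic for infinitely many $n$, or that when it is symplectic its basic classes are incompatible with the holomorphic fiber-sum structure.

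A subtlety to address is relative minimality, which is part of the statement. The Lefschetz fibrations from Theorem~\ref{mainthm1} are relatively minimal, and knot surgery performed away from the critical fibers (say in a neighborhood of a regular fiber or along an internal torus disjoint from $\text{Crit}(f)$) preserves the fibration structure and does not introduce sphere components in fibers, so relative minimality is maintained; I would spell this out. I also need the $X_n$ to remain genus-$g$ Lefschetz fibrations over genus-$h$ surfaces — this is immediate since knot surgery along a torus contained in a regular fiber (or realized as a fiber sum with an elliptic surface $E(1)_{K}$ along a torus fiber) changes neither the base nor the fiber genus nor the set of vanishing cycles.

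The step I expect to be the main obstacle is the second one: verifying rigorously that $X_n$ is not a fiber sum of holomorphic Lefschetz fibrations over the genus-$h$ base. Over $S^2$ one has clean tools (the work of Stipsicz and Smith, the behavior of $c_1^2$, Seiberg--Witten simple type), but over a higher-genus base a fiber sum of holomorphic fibrations need not itself be obviously holomorphic, and one must instead extract the precise structural constraint — likely a statement that its Seiberg--Witten basic classes lie in a specified finite set pulled back from the fiber and sections with controlled signs — and then show the Alexander-polynomial data of $K_n$ forces $X_n$ outside that set. Carrying out this comparison, and making sure the embedded torus used for knot surgery is genuinely homologically essential and has simply-connected complement in \emph{our} $X$ (so that the Fintushel--Stern formula applies and the invariants are genuinely distinct), is where the real work lies; the rest is bookkeeping with the constructions already in hand.
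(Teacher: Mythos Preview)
Your approach has a genuine gap and also overlooks a much simpler route.

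The gap is in the knot-surgery step. You want a homologically essential torus $T \subset X$ with $\pi_1(X \setminus T) = 1$ so that the Fintushel--Stern formula applies. But the total spaces here have enormous fundamental group: they are genus $g \geq 3$ fibrations over genus $h \geq 2$ surfaces with only separating vanishing cycles, so $\pi_1(X)$ surjects onto $\pi_1(\Sigma_h)$ and is far from trivial. There is no torus whose complement is simply connected, and the standard SW gluing formula does not apply. The same obstruction undermines your distinguishing argument (SW invariants are not well behaved here) and your non-holomorphic-fiber-sum argument. Moreover, the paper itself remarks that the Stipsicz--Smith mechanism you invoke (based on maximal sections) breaks down for positive base genus: fiber-sum-decomposable fibrations with maximal sections do exist when $h \geq 1$.

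The paper's proof avoids all of this. The key logical point you are missing is elementary: if a fibration is \emph{fiber sum indecomposable} and is \emph{not itself holomorphic}, then tautologically it cannot be a fiber sum of holomorphic fibrations. So one only needs (i) fiber sum indecomposability and (ii) non-complexity of the total space. The paper achieves both by a direct modification of the monodromy: it inserts a power $T_d^n$ of a Dehn twist about a nonseparating curve $d$ disjoint from all the Torelli factors into one commutator entry. This leaves the projection to $\MCG(\Sigma_g \setminus d)$ equal to the original Torelli monodromy, so fiber sum indecomposability persists by the Fiber Sum Criterion. The single non-Torelli factor $T_d^n$ introduces exactly one relation $n[d]=0$ in $H_1(X_n)$, giving $H_1(X_n) \cong \Z/n\Z \oplus \Z^{2g+2h-1}$; this distinguishes the homotopy types for distinct $n$ and simultaneously forces $b_1(X_n)$ to be odd. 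A lemma of the first author then rules out any complex structure on $X_n$. No gauge theory, no knot surgery, no embedded tori are needed.
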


The proofs of Stipsicz and Smith for the $h=0$ case fails when the base genus is positive; see the discussion at the end of Section~\ref{Sec: LFconstruction}. Our proof is obtained via a direct analysis of the monodromies, making use of an algebraic characterization of fiber sum decomposability we provide at the end of Section~\ref{Sec: Background}.  

\p{Acknowledgments} We would like to thank Benson Farb and Sang-hyun Kim for helpful conversations.

\section{Background} \label{Sec: Background}

In this section we present some background material on mapping class groups, Lefschetz fibrations, and surface bundles.

\subsection{Mapping class groups and Torelli groups} Let $F$ denote a compact, connected, oriented surface with a finite set of marked points in its interior. The \emph{mapping class group} of $F$, denoted $\MCG(F)$, is the group of isotopy classes of orientation-preserving self-diffeomorphisms of $F$ that fix $\partial F$ pointwise and preserve the set of marked points.
 
Denote by $\Sigma_g$ a closed, connected, oriented surface of genus $g$ and by $\Sigma_g^1$ the surface obtained from $\Sigma_g$ by deleting the interior of an embedded disk.  Let $F$ be either $\Sigma_g$ or $\Sigma_g^1$.  The \emph{Torelli group} of $F$, denoted $\I(F)$, is the kernel of the action of $\MCG(F)$ on $H_1(F;\Z)$.

\subsection{Lefschetz fibrations and surface bundles.} A \emph{Lefschetz fibration} $(X,f)$ of a smooth $4$-manifold $X$ is a surjection $f$ to a closed oriented surface $\Sigma$ that is a submersion on the complement of finitely many points $p_i$, at which there are local complex coordinates (compatible with the orientations on $X$ and $\Sigma$) with respect to which the map takes the form $(z_1, z_2) \mapsto z_1 z_2$. When there are no critical points, $(X,f)$ is a surface bundle over a surface.  We say that $(X,f)$ is a genus $g$ Lefschetz fibration (or a surface bundle) over a genus $h$ surface if the genus of a regular fiber $F$ is $g$ and the genus of the base $\Sigma$ is $h$.

Again, in this paper, we only use the term Lefschetz fibration when the set of critical points $\{p_i\}$ is nonempty.  We will moreover assume that all the points $p_i$ lie in distinct fibers; this can always be achieved after a small perturbation. Lastly, the fibration is called \emph{relatively minimal} if there are no spheres of self-intersection $-1$ contained in the fibers. 

 \subsection{Monodromy factorizations.}\label{sec:monodromy} The topology of a Lefschetz fibration $f\colon X \to \Sigma$ with regular fiber $F$ and critical points $p_i$ is determined by the monodromy homomorphism\footnote{A technical point we will suppress is that the monodromy is really an anti-homomorphism, since elements of $\pi_1(\Sigma \setminus \{f(p_i)\})$ are written left-to-right and elements of $\MCG(F)$ are written right-to-left.}
\[ \mu : \pi_1(\Sigma \setminus \{f(p_i)\}) \to \MCG(F) \]
 \cite{Kas, Matsumoto}. The monodromy $\mu$ maps an element of $\pi_1(\Sigma \setminus \{f(p_i)\})$ encircling a single critical value $f(p_i)$ in a  counterclockwise fashion to a positive Dehn twist in $\MCG(F)$. If $D$ is a $2$-disk that contains all of the critical values, then $f$ restricts to a surface bundle over $\Sigma \setminus D$. 

We can choose standard generators $\alpha_j, \beta_j$ for $\pi_1(\Sigma \setminus D)$ and standard generators of $\gamma_\ell$ of $\pi_1(D\setminus\{ f(p_1),\dots,f(p_k)\})$ so that
\[ [\alpha_1, \beta_1] \cdots [\alpha_h, \beta_h] = \gamma_1 \cdots \gamma_k . \]
A Lefschetz fibration $X \to \Sigma_h$ is completely determined by the images of the $\alpha_j$, $\beta_j$, and $\gamma_\ell$ under the monodromy $\mu$.  Since the above relation is the only defining relation for $\pi_1(\Sigma_h\setminus\{ f(p_1),\dots,f(p_k)\})$, genus $g$ Lefschetz fibrations $f : X \to \Sigma_h$ are completely determined by choices of $\mu(\alpha_j),\mu(\beta_j) \in \MCG(\Sigma_g)$ and choices of positive Dehn twists $T_{c_\ell}=\mu(\gamma_\ell) \in \MCG(\Sigma_g)$ satisfying the relation
\[ T_{c_k} \cdots  T_{c_1}  \, [\mu(\alpha_1)^{-1}, \mu(\beta_1)^{-1}] \cdots [\mu(\alpha_h)^{-1}, \mu(\beta_h)^{-1}]= 1 .\]
Such an expression is called a \emph{monodromy factorization} for $(X,f)$. Conversely, whenever we have such a factorization, we can build a genus $g$ Lefschetz fibration $(X,f)$ over a genus $h$ surface with $k$ critical points. 

A Dehn twist $T_c$ is contained in the Torelli group if and only if $c$ is a separating curve. So the monodromy $\mu$ of a Lefschetz fibration $(X,f)$ lies in the Torelli group $\I(F) < \MCG(F)$ if and only if there is a monodromy factorization as above where $\mu(\alpha_j), \mu(\beta_j)$ are in $\I(F)$ and each $\mu(\gamma_\ell)$ is positive Dehn twist along a separating curve. Moreover, it is not hard to see that a monodromy factorization contains a Dehn twist along a nullhomotopic curve if and only if there is a self-intersection $-1$ sphere in a fiber. 

Analogous statements hold mutatis mutandis for surface bundles over surfaces where we simply drop from the discussion the fundamental group generators enclosing the critical points.

\subsection{Pullbacks and monodromies} \label{pullbacksubsection} Given a genus $g$ Lefschetz fibration $f\colon X \to \Sigma$ and a covering $p: \widetilde{\Sigma} \to \Sigma$, we can define the pullback Lefschetz fibration $\widetilde{f}\colon \widetilde{X} \to \widetilde{\Sigma}$ in the usual way.  The pullback of a relatively minimal Lefschetz fibration is again a relatively minimal Lefschetz fibration of the same fiber genus.  

Say the critical values of $f$ are $q_1, \ldots, q_n$, and denote by $p'$ the restriction of $p$ to $\widetilde{\Sigma} \setminus p^{-1}(\{ q_i \})$.  One monodromy factorization of the pullback is 
\[ \widetilde{\mu} = \mu \circ p'_\star : \pi_1\left(\widetilde{\Sigma} \setminus p^{-1}(\{ q_i \})\right) \to \MCG(\Sigma_g) ,\]
where $\mu$ is a monodromy factorization for $f$. If the degree of $p$ is $m$ then $\widetilde f$ has $m n$ critical points.   The free homotopy class of a counterclockwise simple loop around one point of $p^{-1}(q_i)$ maps to the free homotopy class of a counterclockwise simple loop around $q_i$, and therefore the monodromy around each point of $p^{-1}(q_i)$ in the pullback is conjugate to the monodromy around $q_i$ in the original fibration.  Since the monodromy around $q_i$ is a nontrivial positive Dehn twist, we can see directly from the monodromy description of the pullback that the monodromy around each point of $p^{-1}(q_i)$ is a nontrivial positive Dehn twist.

\subsection{Fiber sum indecomposability.}\label{sec:fiber sum} A common way to construct new Lefschetz fibrations from old ones is the \emph{fiber sum} operation, defined as follows. Let $(X_1, f_1)$ and $(X_2,f_2)$ be genus $g$ Lefschetz fibrations over surfaces of genus $h_1$ and $h_2$, respectively, with regular fibers $F_1$ and $F_2$. The \emph{fiber sum} of $(X_1, f_1)$ and $(X_2,f_2)$ is a genus $g$ Lefschetz fibration over a surface of genus $h_1+h_2$ obtained by removing a fibered tubular neighborhood of each $F_i$ and then identifying the resulting boundaries via any fiber-preserving, orientation-reversing diffeomorphism.

As in the introduction, a Lefschetz fibration $(X,f)$ is fiber sum indecomposable if it cannot be expressed as a fiber sum of any two Lefschetz fibrations where neither is a trivial bundle over $S^2$. Again, such fibrations can be regarded as the irreducible building blocks of Lefschetz fibrations.

We will now give an algebraic interpretation of the fiber sum operation in terms of monodromies and present an algebraic criterion that obstructs the presence of nontrivial fiber sum decompositions. Our discussion here extends the one in our earlier work on surface bundles \cite{BaykurMargalit}. 

Let $(X_1,f_1)$ and $(X_2,f_2)$ be genus $g$ Lefchetz fibrations over surfaces of genus $h_1$ and $h_2$, respectively.  Let $\mu_i \colon \pi_1(\Sigma_i \setminus \text{Crit}(f_i)) \to \MCG(\Sigma_g)$ be the monodromy of the fibration $f_i$, for $i=1,2$.  There is an induced homomorphism
\[ \mu_1 \ast \mu_2 : \pi_1 (\Sigma_1 \setminus \text{Crit}(f_1)) \ast \pi_1 (\Sigma_2 \setminus \text{Crit}(f_2)) \to \MCG(\Sigma_g). \]
Let $\Sigma$ be a surface of genus $h=h_1+h_2$ obtained by taking the connected sum of the $\Sigma_i$, and let $\gamma$ denote the simple closed curve in $\Sigma$ along which the $\Sigma_i \setminus D^2$ are glued. Base $\pi_1(\Sigma)$ at a point of $\gamma$.  There is a homomorphism
\[ \pi_1(\Sigma \setminus \text{Crit}(f_i)) \to \pi_1(\Sigma_1 \setminus \text{Crit}(f_1)) \ast \pi_1(\Sigma_2 \setminus \text{Crit}(f_2)) \]
induced by collapsing $\gamma$ to a point. The monodromy of the fiber sum of $(X_1,f_1)$ and $(X_2,f_2)$ is induced by postcomposing the above map with $\mu_1 \ast \mu_2$.

We conclude that a Lefschetz fibration is fiber sum indecomposable if and only if its monodromy does not decompose into a nontrivial free product of two monodromies as above. More precisely:

\begin{fscriterion}
A Lefschetz fibration is fiber sum decomposable if and only if the kernel of the monodromy contains a nontrivial separating simple closed curve.
\end{fscriterion}

We can restate the Fiber Sum Criterion in terms of the monodromy factorization: a genus $g$ Lefschetz fibration $f$ over $\Sigma_h$ with $\ell$ critical points and monodromy $\mu$ is fiber sum decomposable if and only if there is a choice of generators $\alpha_j, \beta_j, \gamma_\ell$ for $\pi_1(\Sigma_h \setminus \text{Crit}(f))$ so that
\[ T_{c_n} \cdots T_{c_1} \, [\mu(\alpha_1)^{-1},\mu(\beta_1)^{-1}]\cdots[\mu(\alpha_m)^{-1},\mu(\beta_m)^{-1}]  =  1 \] 
in $\MCG(\Sigma_g)$, where $1 \leq 2m+n < 2h+ \ell$. In other words, a Lefschetz fibration is fiber sum decomposable if and only if it can be prescribed by a factorization of the identity in $\MCG(\Sigma_g)$ that contains a nontrivial and proper subfactorization of the identity. Recall that the fiber sum operation involves a choice of gluing map, which amounts to conjugating such a subfactorization by an element in $\MCG(\Sigma_g)$; this clearly preserves the property of having a proper subfactorization of the identity.

\section{Construction of Lefschetz fibrations} \label{Sec: LFconstruction}

In this section we will prove Theorems \ref{mainthm1} and~\ref{mainthm2}, that is, we will give explicit examples of fiber sum indecomposable Lefschetz fibrations with monodromy in the Torelli group, and then we will modify our construction to give our Lefschetz fibrations that are not fiber sums of holomorphic ones.  We begin by describing the key ingredient, unit tangent bundle subgroups of the mapping class group.

\subsection{Unit tangent bundle subgroups}

Let $2 \leq g_0 < g$.  Denote by $\UT(\Sigma_{g_0})$ the unit tangent bundle of $\Sigma_{g_0}$.   In this section we will construct an embedding of $\pi_1(\UT(\Sigma_{g_0}))$ in $\I(\Sigma_g)$.

First, the inclusion $\Sigma_{g_0}^1 \to \Sigma_g$ induces an injective homomorphism
\[ \MCG(\Sigma_{g_0}^1) \to \MCG(\Sigma_g); \]
see \cite[Theorem 3.18]{FarbMargalit}.  Since $H_1(\Sigma_{g_0}^1;\Z)$ injects into $H_1(\Sigma_g;\Z)$, the last homomorphism restricts to
\[ \I(\Sigma_{g_0}^1) \to \I(\Sigma_g). \]
Thus, it suffices to find a subgroup of $\I(\Sigma_{g_0}^1)$ isomorphic to $\pi_1(\UT(\Sigma_{g_0}))$.

There is a short exact sequence
\[ 1 \to \pi_1(\UT(\Sigma_{g_0})) \stackrel{\Push}{\to} \MCG(\Sigma_{g_0}^1) \to \MCG(\Sigma_{g_0}) \to 1;\]
see \cite[Section 4.2]{FarbMargalit}.

The group $\pi_1(\UT(\Sigma_{g_0}))$ has the presentation
\[ \langle \widetilde \alpha_1,\widetilde \beta_1,\dots,\widetilde \alpha_{g_0},\widetilde \beta_{g_0},t \mid [\widetilde \alpha_1,\widetilde \beta_1]\cdots [\widetilde \alpha_{g_0},\widetilde \beta_{g_0}]=t^{2g_0-2}, [\widetilde \alpha_i,t]=[\widetilde \beta_i,t]=1 \rangle. \] 
Here $t$ is the simple loop contained in the fiber and the other generators are arbitrarily chosen lifts of the standard generators for $\pi_1(\Sigma_{g_0})$.

We can explicitly describe the image of each generator of $\pi_1(\UT(\Sigma_{g_0}))$ in $\MCG(\Sigma_{g_0}^1)$.  First, the central element $t$ maps to the Dehn twist $T_c$, where $c$ is the boundary of $\Sigma_{g_0}^1$.  Next, let $\alpha$ denote the image of some $\widetilde \alpha \in \{\widetilde \alpha_i, \widetilde \beta_i\}$ in the fundamental group $\pi_1(\Sigma_{g_0})$ based at a point on the boundary.  We represent $\alpha$ by a simple loop in $\Sigma_{g_0}$ that is disjoint from the boundary away from its endpoints.  A closed regular neighborhood of $\alpha \cup c$ is homeomorphic to a pair of pants, that is, a sphere with three boundary components.  One of the boundary components is $c$.   We denote the two boundary components lying to the right and to the left of $\alpha$ by $\alpha^+$ and $\alpha^-$.  Finally, we have\footnote{Again, we have the situation where $\Push$ is an anti-homomorphism.}  
\[ \Push(\widetilde \alpha) = T_{\alpha^+}T_{\alpha^-}^{-1}. \]
In particular, we observe that, since $\alpha^+$ and $\alpha^-$ are homologous in $\Sigma_{g_0}^1$, the image of $\pi_1(\UT(\Sigma_{g_0}))$ in $\MCG(\Sigma_{g_0}^1)$ lies in $\I(\Sigma_{g_0}^1)$.  

Combining all of the above observations, we deduce the following fact.

\begin{figure}
\labellist
\small\hair 2pt
\pinlabel {$\alpha_1$} [] at 292 110 
\pinlabel {$\beta_1$} [] at 220 112 
\pinlabel {$\alpha_2$} [] at 95 122 
\pinlabel {$\beta_2$} [] at 140 40 
\endlabellist
\centering \includegraphics[scale=.9]{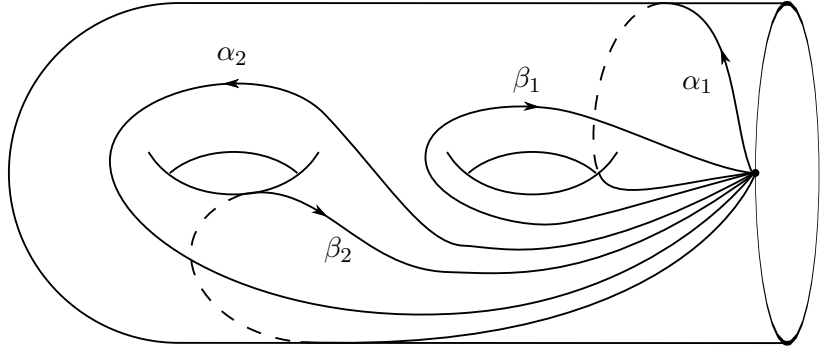}
\caption{Standard generators for $\pi_1(\Sigma_2)$}
\label{figure:generators}
\end{figure}

\begin{lemma}
\label{lemma:TorelliFactorization}
Let $g \geq 3$ and $2 \leq g_0 < g$.   Let $c$ be a separating curve in $\Sigma_g$ that cuts off a genus $g_0$ subsurface $\Sigma_{g_0}^1$, and choose standard generators $\{\widetilde \alpha_i,\widetilde \beta_i\}$ for $\pi_1(\UT(\Sigma_{g_0}^1))$ as in the above presentation.  Then the following relation holds in $\I(\Sigma_g)$:
\[ T_{c}^{2g_0-2}=   [\Push(\widetilde \beta_{g_0})^{-1}, \Push(\widetilde \alpha_{g_0})^{-1}] \cdots [\Push(\widetilde \beta_1)^{-1}, \Push(\widetilde \alpha_1)^{-1}].
\]
\end{lemma}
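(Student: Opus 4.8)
The plan is to obtain the relation by transporting the single defining surface relation of $\pi_1(\UT(\Sigma_{g_0}))$ through the anti-homomorphism $\Push$ and then through the inclusion-induced maps into $\MCG(\Sigma_g)$. Concretely, I would start from the relation
\[ [\widetilde\alpha_1,\widetilde\beta_1]\cdots[\widetilde\alpha_{g_0},\widetilde\beta_{g_0}] = t^{2g_0-2} \]
in the presentation displayed above and apply $\Push$ to both sides. Since $\Push$ is an anti-homomorphism, it reverses the order of products and satisfies $\Push(x^{-1})=\Push(x)^{-1}$; on the right-hand side this causes no trouble because a power of the single element $t$ is sent to $\Push(t)^{2g_0-2}=T_c^{2g_0-2}$. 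On the left-hand side the anti-homomorphism reverses the order of the $g_0$ commutator blocks, and each block $[\widetilde\alpha_i,\widetilde\beta_i]=\widetilde\alpha_i\widetilde\beta_i\widetilde\alpha_i^{-1}\widetilde\beta_i^{-1}$ is carried to
\[ \Push(\widetilde\beta_i)^{-1}\Push(\widetilde\alpha_i)^{-1}\Push(\widetilde\beta_i)\Push(\widetilde\alpha_i) = [\Push(\widetilde\beta_i)^{-1},\Push(\widetilde\alpha_i)^{-1}], \]
using the convention $[a,b]=aba^{-1}b^{-1}$. Hence in $\MCG(\Sigma_{g_0}^1)$ one gets exactly
\[ [\Push(\widetilde\beta_{g_0})^{-1},\Push(\widetilde\alpha_{g_0})^{-1}]\cdots[\Push(\widetilde\beta_1)^{-1},\Push(\widetilde\alpha_1)^{-1}] = T_c^{2g_0-2}. \]

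Next I would transport this identity along the injective homomorphism $\MCG(\Sigma_{g_0}^1)\to\MCG(\Sigma_g)$ induced by including the genus $g_0$ subsurface cut off by $c$. Relations are preserved by any homomorphism, so the displayed identity holds verbatim in $\MCG(\Sigma_g)$, with $T_c$ now the Dehn twist about the image curve $c$ (separating, cutting off genus $g_0$, so with complementary genus $g-g_0\geq 1$ since $g_0<g$). Because $H_1(\Sigma_{g_0}^1;\Z)$ injects into $H_1(\Sigma_g;\Z)$, this homomorphism restricts to $\I(\Sigma_{g_0}^1)\to\I(\Sigma_g)$; as the image of $\Push$ lies in $\I(\Sigma_{g_0}^1)$ (the curves $\alpha^\pm$ being homologous in $\Sigma_{g_0}^1$) and $c$ is separating so $T_c\in\I(\Sigma_g)$, every term of the relation lies in $\I(\Sigma_g)$. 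This gives precisely the asserted relation.

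There is no genuine obstacle here; the only thing demanding care is the bookkeeping forced by the anti-homomorphism convention, which is exactly what produces the reversal of the ordering of the commutator blocks (from $1,\dots,g_0$ to $g_0,\dots,1$) and the transposed commutators $[\Push(\widetilde\beta_i)^{-1},\Push(\widetilde\alpha_i)^{-1}]$ rather than $[\Push(\widetilde\alpha_i),\Push(\widetilde\beta_i)]$. It is also worth noting where the hypotheses enter: $g_0\geq 2$ is what makes the short exact sequence and the above presentation available (and ensures $2g_0-2>0$), while $g_0<g$ is what makes $c$ genuinely separating inside $\Sigma_g$; injectivity of $\Push$ and of the inclusion maps is not needed for the relation itself, only that they are (anti-)homomorphisms respecting the images of $t$ and of the $\widetilde\alpha_i,\widetilde\beta_i$.
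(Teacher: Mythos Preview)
Your proposal is correct and is exactly the argument the paper intends: the lemma is stated there as an immediate consequence of the preceding observations (the presentation of $\pi_1(\UT(\Sigma_{g_0}))$, the description of $\Push$ with $\Push(t)=T_c$, the fact that the image of $\Push$ lies in $\I(\Sigma_{g_0}^1)$, and the inclusion $\I(\Sigma_{g_0}^1)\hookrightarrow\I(\Sigma_g)$), and you have simply written out that deduction in full, including the anti-homomorphism bookkeeping that accounts for the reversed order and the form $[\Push(\widetilde\beta_i)^{-1},\Push(\widetilde\alpha_i)^{-1}]$ of the commutators.
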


\subsection{Commutator lengths}

Let $G$ be a group, and consider its commutator subgroup $[G,G]$ endowed with the generating set consisting of all commutators in $G$.  We define the \emph{commutator length} of an element of $[G,G]$ to be the word length of that element with respect to this generating set.  In other words, the commutator length is the smallest number of commutators needed to write a given element of $[G,G]$.

Before we move on to the proof of Theorem~\ref{mainthm1} we will observe the following.

\begin{proposition} \label{CL}
Let $g \geq 3$ and let $c$ be a separating simple closed curve in $\Sigma_g$ that cuts off a genus 2 surface. Then the commutator length of $T_{c}^{2}$ in $\I(\Sigma_g)$ as well as in $\MCG(\Sigma_g)$ is 2. 
\end{proposition}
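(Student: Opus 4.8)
The plan is to prove the two inequalities separately. For the upper bound, I would simply invoke Lemma~\ref{lemma:TorelliFactorization} with $g_0 = 2$: that lemma exhibits $T_c^{2g_0-2} = T_c^2$ as a product of exactly two commutators in $\I(\Sigma_g)$, hence also in $\MCG(\Sigma_g)$. So the commutator length of $T_c^2$ is at most $2$ in both groups. Since $\I(\Sigma_g) < \MCG(\Sigma_g)$, it suffices to prove the lower bound $\geq 2$ in the larger group $\MCG(\Sigma_g)$; equivalently, I must rule out $T_c^2$ being a single commutator (it is obviously not the identity, since $T_c$ has infinite order).

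\textbf{The main obstacle: the lower bound.} The hard part is showing $T_c^2$ is not a commutator in $\MCG(\Sigma_g)$. The natural tool here is a homomorphism or quasimorphism that detects this. In the spirit of \cite{KorkmazOzbagciCL, EndoKotschickCL, KorkmazCL, BaykurKorkmazMonden}, I would use either (a) the signature/Euler characteristic of the associated Lefschetz fibration together with known bounds, or (b) a homogeneous quasimorphism on $\MCG(\Sigma_g)$ whose value on $T_c$ is known and whose defect bounds commutator length via Bavard duality. Approach (b) seems cleanest: if $\phi$ is a homogeneous quasimorphism with defect $D(\phi)$, then any element of commutator length $n$ satisfies $|\phi(x)| \leq 2n\, D(\phi)$ (more precisely $|\phi(x)| \leq (2n-1)D(\phi)$), so exhibiting a quasimorphism with $|\phi(T_c^2)| = 2|\phi(T_c)| > 2D(\phi)$ forces commutator length $\geq 2$. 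Endo--Kotschick and others constructed such quasimorphisms on mapping class groups (via gauge theory / the signature cocycle) that are nonzero on Dehn twists about separating curves and whose defect can be controlled; I would cite the appropriate one and check the numerical inequality.

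\textbf{Alternative via Lefschetz fibrations.} If a suitable quasimorphism is not readily quotable with explicit constants, the fallback is the four-manifold argument: a single-commutator relation $T_c^2 = [a,b]$ in $\MCG(\Sigma_g)$ would yield a genus $g$ Lefschetz fibration over $\Sigma_1$ (a torus) with exactly two critical points, both Dehn twists about the separating curve $c$. One then derives a contradiction from the geometry of such a fibration — for instance, by blowing down or by analyzing the induced sections/the homology of the total space, or by appealing to the adjunction-type or signature constraints that were used for the $h=0$ analogues in \cite{KorkmazOzbagciCL, StipsiczIndecomposable}. The separating vanishing cycles mean the total space contains reducible fibers, and a count of $b_2$, the signature, or the Euler characteristic against the constraints for fibrations with so few critical points should be incompatible. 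I expect the quasimorphism route to be the more economical writeup, so I would lead with that and relegate the fibration argument to a remark. The one genuine subtlety to be careful about is that the lower bound must be stated for $T_c^2$ specifically (not, say, $T_c$, whose commutator length is $1$ already since $T_c \in [\I,\I]$ need not even hold) — so the quasimorphism's value on $T_c$ must be strictly more than half its defect, which is exactly the content of the cited computations.
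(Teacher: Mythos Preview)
Your proposal is correct and follows essentially the same approach as the paper. The upper bound via Lemma~\ref{lemma:TorelliFactorization} is identical, and for the lower bound the paper simply quotes the explicit Endo--Kotschick inequality $\mathrm{cl}(T_c^k) \geq \lceil 1 + k/(6(3g-1))\rceil \geq 2$ rather than re-explaining the quasimorphism mechanism behind it; your approach (a) is exactly this result, so you can just cite it directly instead of reproving it.
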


\begin{proof} 

Endo and Kotschick \cite{EndoKotschickCL} proved that the commutator length of $T_c^k$ in $\MCG(\Sigma_g)$ with $k > 0$ is bounded from below by
\[ \left\lceil 1+ \frac{k}{6(3g-1)} \right\rceil  \geq 2. \]
By Lemma~\ref{lemma:TorelliFactorization}, the commutator length of $T_c^2$ in $\I(\Sigma_g)$ is at most 2.  The proposition follows.
\end{proof}

\subsection{Indecomposable Torelli Lefschetz fibrations} We now prove Theorem~\ref{mainthm1}, which gives explicit Lefschetz fibrations with monodromy group in the Torelli group.

\begin{proof}[Proof of Theorem~\ref{mainthm1}]

We first treat the case $h=2$.  We will construct an explicit monodromy
\[ \pi_1(\Sigma_{2,2}) \to \I(\Sigma_g) \]
where $\Sigma_{2,2}$ is $\Sigma_2$ minus two points. 

By Lemma~\ref{lemma:TorelliFactorization} the following relation holds in $\I(\Sigma_{g})$:  
\[ T_c^2= [\Push(\widetilde \beta_2)^{-1}, \Push(\widetilde \alpha_2)^{-1}][\Push(\widetilde \beta_1)^{-1}, \Push(\widetilde \alpha_1)^{-1}]  \]
where the curves in the factorization are lifts of the curves in Figure~\ref{figure:generators}. This relation prescribes a genus $g$ Lefschetz fibration $(X,f)$ over $\Sigma_2$ whose monodromy group is contained in $\I(\Sigma_2)$. 

We would like to show that this Lefschetz fibration is fiber sum indecomposable. By the Fiber Sum Criterion, it suffices to show that the monodromy contains no essential simple loops in its kernel. 

We claim that the monodromy can be written as a composition:
\[ \pi_1(\Sigma_{2,2}) \stackrel{\eta}{\to} \pi_1(\UT(\Sigma_2)) \to \I(\Sigma_g). \]
We already described the second map and said that it is injective.  It remains to define $\eta$, to show that it agrees with our desired monodromy, and to show that the kernel of $\eta$ contains no essential simple separating loops.

Choose a standard generating set $\{\alpha_1,\beta_1,\alpha_2,\beta_2, \gamma_1,\gamma_2\}$ for $\pi_1(\Sigma_{2,2})$ as in Section~\ref{sec:monodromy}, so
\[ [\alpha_1, \beta_1] [\alpha_2, \beta_2] = \gamma_1 \gamma_2 \]
and so that the image of $\alpha_i$ in $\pi_1(\Sigma_2)$ equals the image of $\widetilde \alpha_i$ in $\pi_1(\Sigma_2)$ (similar for the $\beta_i$).

We define the map $\eta : \pi_1(\Sigma_{2,2}) \to \pi_1(\UT(\Sigma_2))$ on the generators:
\begin{align*}
\alpha_i &\mapsto \widetilde \alpha_i \\
\beta_i &\mapsto \widetilde \beta_i \\
\gamma_i &\mapsto t
\end{align*}
It follows from our presentation of $\pi_1(\UT(\Sigma_2))$ and the defining relation for our standard presentation of $\pi_1(\Sigma_2)$ that this map is a well-defined homomorphism.  It is clear that this map agrees with our original monodromy.

Our map $\eta$ makes the following diagram commute:
\[
\xymatrix{
\pi_1(\Sigma_{2,2}) \ar[dr]_\iota \ar[r]^\eta & \pi_1(\UT(\Sigma_2))  \ar[d] \\ 
 & \pi_1(\Sigma_2) && 
}
\]
where $\iota$ is induced by inclusion $\Sigma_{2,2} \to \Sigma_2$ and the vertical map is induced by the projection $\UT(\Sigma_2) \to \Sigma_2$. 

We claim that any essential simple loop $\delta$ in the kernel of $\iota$ is conjugate to
\[ (\delta_1 \delta_2)^{\pm 1}   \]
where each $\delta_i$ is conjugate to $\gamma_i$.  Indeed, any simple loop surrounding two punctures can be rewritten as a product of two simple loops, each surrounding one of the punctures, both in the same direction as the original loop.  Combining this with the fact that freely homotopic loops are conjugate in the fundamental group, the claim follows.

By the commutativity of the above diagram $\ker(\eta) \subseteq \ker(\iota)$, so it suffices to show $\eta(\delta)$ is nontrivial.  Since $\eta(\gamma_i)$ is the central element $t$ for all $j$ and $\delta_i$ is conjugate to $\gamma_i$, it follows that $\eta(\delta_i)=t$.  We conclude that $\eta(\delta)=t^{\pm 2}$.  We have thus succeeded in showing that the kernel of $\eta$ contains no essential simple loops, as desired.

\medskip

Now let $h \geq 3$.  Fix a covering map $p: \Sigma_h \to \Sigma_2$.  We would like to show that the pullback fibration is fiber sum indecomposable.  

The covering $p$ restricts to a covering map $p' : \Sigma_{h,2h-2} \to \Sigma_{2,2}$.  We consider the composition
\[ \pi_1(\Sigma_{h,2h-2}) \stackrel{p_\star'}{\to} \pi_1(\Sigma_{2,2}) \stackrel{\eta}{\to} \pi_1(\UT(\Sigma_2)) \to \I(\Sigma_{g}). \]

We now need to check that there are no essential separating loops in the kernel of $\eta \circ p_\star'$.  We treat two cases.

First assume that $\delta$ is an essential separating curve in $\Sigma_{h,2h-2}$ that does not lie in the kernel of $\iota' : \pi_1(\Sigma_{h,2h-2}) \to \pi_1(\Sigma_h)$.  There is a commutative diagram
\[
\xymatrix{
\pi_1(\Sigma_{h,2h-2}) \ar[r]^{\iota'} \ar[d]_{p'_\star} & \pi_1(\Sigma_h) \ar[d]^{p_\star} \\
\pi_1(\Sigma_{2,2}) \ar[r]^\iota & \pi_1(\Sigma_2) \\
}
\]
Since $p_\star$ is injective, the image of $\delta$ in $\pi_1(\Sigma_2)$ is nontrivial.  Therefore $p_\star'(\delta)$ does not lie in the kernel of $\iota$.  As above, this implies that $p_\star'(\delta)$ does not lie in the kernel of $\eta$.  Thus $\delta$ is not in the kernel of $\eta \circ p_\star'$.

Now suppose $\delta$ does lie in the kernel of $\iota'$.  As in the above claim, $\delta^{\pm 1}$ is a product of all counterclockwise simple peripheral loops.  Since $p'$ is the restriction of $p$, these all map to conjugates of the $\gamma_i$ in $\pi_1(\Sigma_{2,2})$ under $p_\star'$, and so then they all map to $t$ in $\pi_1(\UT(\Sigma_2))$ under $\eta$. The result now follows.

The Lefschetz fibrations we obtained in the proof are all relatively minimal since each loop in $\Sigma_{h,2h-2}$ surrounding a single puncture maps to $t^{\pm 1} \in \pi_1(\UT(\Sigma_2))$.  This completes the proof.
\end{proof}

As promised in the introduction, we also show that Theorem~\ref{mainthm1} cannot be extended to cover the cases of small fiber and base genera.

\begin{proposition} \label{SmallGenera}
When $g \leq 2$ or $h \leq 1$, there are no relatively minimal genus $g$ Lefschetz fibrations over a genus $h$ surface with monodromy group contained in the Torelli group. 
\end{proposition}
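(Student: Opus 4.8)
The plan is to rule out the various small-genus cases separately, using two quite different kinds of input: for small fiber genus, facts about which Dehn twists can appear in a monodromy factorization together with the fact that the Torelli group is trivial or free; for small base genus, an Euler-characteristic or signature obstruction (the $h=0$ case being Smith's theorem, which we may cite).

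First I would dispose of $g \leq 2$. If $g = 0$, there is no separating curve, so the monodromy factorization cannot contain any nontrivial Dehn twist, contradicting the requirement that a Lefschetz fibration has a nonempty critical set. If $g = 1$, every essential separating curve in $\Sigma_1$ is null-homotopic, so a Torelli Dehn twist $T_c$ with $c$ separating is a twist about a null-homotopic curve, which by the discussion in Section~\ref{sec:monodromy} produces a self-intersection $-1$ sphere in a fiber, violating relative minimality. (In fact $\I(\Sigma_1)$ is trivial, so the $\mu(\alpha_j),\mu(\beta_j)$ are trivial too and the factorization reads $T_{c_k}\cdots T_{c_1}=1$ with each $c_\ell$ null-homotopic.) If $g = 2$, a separating Dehn twist lies in $\I(\Sigma_2)$, which is a free group (indeed infinitely generated and free, by Mess). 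The relation $T_{c_k}\cdots T_{c_1}\,[\mu(\alpha_1)^{-1},\mu(\beta_1)^{-1}]\cdots = 1$ must then hold in a free group with all the $T_{c_\ell}$ nontrivial (by relative minimality, none of the $c_\ell$ is null-homotopic, and a nonseparating curve would not give a Torelli twist). I would argue that no such identity can hold: a nontrivial positive Dehn twist about a separating curve is not a proper power in $\I(\Sigma_2)$ issues aside, the cleanest route is to abelianize — passing to $H_1(\I(\Sigma_2))$ kills the commutators, leaving $\sum [T_{c_\ell}] = 0$; since a free group has free abelianization and each separating twist is a primitive-ish element, this forces a cancellation among the $[T_{c_\ell}]$ that is incompatible with all twists being \emph{positive} (one could also invoke that $\I(\Sigma_2)$ being free of infinite rank, its abelianization is free of infinite rank, and the classes of distinct separating twists are part of a basis, while equal separating twists would mean repeated critical values can be merged — here one must be a little careful and instead observe directly that a product of positive separating twists cannot equal a product of commutators in a free group, because in a free group the only element equal to a product of commutators that is also a product of conjugates of a fixed primitive element and its positive powers is the identity). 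The hard part will be making this $g=2$ step airtight rather than hand-wavy; I expect the intended argument simply cites that $\I(\Sigma_2)$ is free together with Smith's theorem or an Euler-characteristic count, and I would look for the slickest such phrasing.

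Next I would handle $h \leq 1$ with $g \geq 3$. The case $h = 0$ is Smith's theorem \cite{SmithTorelli, ABKPTorelli}, cited directly. For $h = 1$, the base is $T^2$, so $\pi_1(\Sigma_1 \setminus \text{Crit}(f))$ and the monodromy factorization reads $T_{c_k}\cdots T_{c_1}\,[\mu(\alpha_1)^{-1},\mu(\beta_1)^{-1}] = 1$ in $\MCG(\Sigma_g)$ with $k \geq 1$ and all $c_\ell$ separating and essential. The natural approach is a numerical obstruction: the total space $X$ of a relatively minimal genus $g$ Lefschetz fibration with only separating vanishing cycles over a genus $h$ base has Euler characteristic $\chi(X) = 4(g-1)(h-1) + k$ and, because all vanishing cycles are separating, each contributes $+1$ rather than $-1$ to the signature in the relevant Novikov-additivity/Meyer-cocycle computation; one compares this with the constraints coming from $X$ being fibered (e.g. $b_1(X) \geq 2h = 2$, restrictions on $b_2^+$, or the fact that a separating-vanishing-cycle fibration over a positive-genus base has $X$ admitting a symplectic structure with $c_1$ controlled). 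Concretely, I would show that when $h = 1$ one can always split off a trivial $S^2$-bundle summand — i.e. there is a separating loop in the kernel — forcing reducibility down to the $h = 0$ case, and then Smith's theorem finishes it; alternatively, invoke that over $T^2$ the single commutator $[\mu(\alpha_1)^{-1},\mu(\beta_1)^{-1}]$ must equal $(T_{c_1}\cdots T_{c_k})^{-1}$, a product of positive separating twists, which is impossible in $\MCG(\Sigma_g)$ because such a product has strictly positive image under the Endo–Kotschick-type homogeneous quasimorphism (the same quasimorphism used in Proposition~\ref{CL} bounding commutator length) while a single commutator has small defect — making the inequality $\lceil 1 + k/(6(3g-1))\rceil \leq 1$ fail for $k \geq 1$. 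That last observation is the cleanest: the Endo–Kotschick bound already cited says commutator length of a nonempty product of positive separating twists is $\geq 2$, but $h = 1$ would express it with commutator length $1$, a contradiction. I expect this to be the intended argument and the main obstacle is merely checking that the product of the $T_{c_\ell}$ is genuinely nontrivial and has positive "signature defect" (true since the $c_\ell$ are essential and the twists positive), after which the Endo–Kotschick inequality applies verbatim.
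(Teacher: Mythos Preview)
Your proposal eventually lands on precisely the paper's argument. For $h \leq 1$, the Endo--Kotschick commutator-length bound you reach at the end of your last paragraph is exactly what the paper invokes, and it handles $h=0$ and $h=1$ in one stroke (the paper mentions Smith only as an alternative for $h=0$); the Euler-characteristic, signature, and ``reduce to $h=0$'' detours you sketch earlier are unnecessary and should be cut. For $g \leq 2$, the paper, like you, simply appeals to the fact that $\I(\Sigma_g)$ is free (trivial when $g \leq 1$), and is in fact just as terse as you feared---it asserts without further elaboration that freeness ``precludes'' the required relation.

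Your abelianization idea is the right way to make the $g=2$ step rigorous, and the missing ingredient is this: by Mess's description of $\I(\Sigma_2)$, the free generators can be taken to be one separating Dehn twist per $\I(\Sigma_2)$-orbit of separating curves (equivalently, per symplectic splitting of $H_1(\Sigma_2;\Z)$), so every separating twist $T_c$ is conjugate \emph{in $\I(\Sigma_2)$} to a free generator and hence maps to a basis vector in $H_1(\I(\Sigma_2);\Z)$. A nonempty product of positive separating twists then has nonzero abelianization, while any product of commutators abelianizes to zero. That replaces your ``primitive-ish'' hand-wave with an actual argument.
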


\begin{proof}
Let us begin with the case when the base genus $h$ is $0$ or $1$. As in Section~\ref{sec:monodromy}, a relatively minimal Lefschetz fibration can have monodromy group inside the Torelli group only if all the Dehn twists in its monodromy factorization are along nontrivial separating curves in the fiber. Since $h \leq 1$, the monodromy factorization of such a fibration would yield an expression of a product of nontrivial Dehn twists about separating curves as a product of at most one commutator in $\MCG(\Sigma_g)$. This contradicts the theorem of Endo and Kotschick we quoted in the proof of Proposition~\ref{CL} above. (An alternative proof for the $h=0$ case was already obtained by Ivan Smith \cite[Theorem~6.2]{SmithIndecomposable}.)

Next, for $g \leq 2$, the Torelli group $\I(\Sigma_g)$ is a free group (in fact, it is trivial when $g \leq 1$).  This precludes the possibility of finding a relation of the sort required for a monodromy factorization of a Lefschetz fibration, and so we are done.
\end{proof}


\subsection{Lefschetz fibrations that are not fiber sums of holomorphic ones}
Next is the proof of our third main theorem.  Specifically, we will give explicit constructions of Lefschetz fibrations that are not holomorphic, and moreover cannot be written as fiber sums of holomorphic fibrations.

\begin{proof}[Proof of Theorem~\ref{mainthm2}]

We will only present the desired Lefschetz fibrations over genus $2$ surfaces. The families of Lefschetz fibrations over surfaces of base genera $h \geq 3$ can then be obtained by taking pullbacks of these fibrations as in the proof of Theorem~\ref{mainthm1}. 

Let $c, \alpha_j^+, \alpha_j^-, \beta_j^+, \beta_j^-$ be the curves in $\Sigma_g$ given in Figure~\ref{figure:pushrelation}, and let $d$ be any nonseparating simple closed curve contained in the genus $g-2$ subsurface not shown in the picture.  The Dehn twist $T_d$ commutes with $T_{\alpha_1^+}$, $T_{\alpha_1^-}^{-1}$, $T_{\beta_1^+}$, and $T_{\beta_1^-}^{-1}$. It then follows from Lemma~\ref{lemma:TorelliFactorization} that the following relation holds in $\MCG(\Sigma_{g})$:
\[ T_{c}^2 \, [T_d^n T_{\alpha_1^-}T_{\alpha_1^+}^{-1} , T_{\beta_1^-}T_{\beta_1^+}^{-1} ] [T_{\alpha_2^-}T_{\alpha_2^+}^{-1} , T_{\beta_2^-}T_{\beta_2^+}^{-1}  ] = 1 .
\]

For each $n \geq 0$, let $(X_n, f_n)$ denote the genus $g$ Lefschetz fibration over a genus $2$ surface prescribed by the above factorization. Note that each one of these fibrations admits a section: the monodromy of our Lefschetz fibration over $\Sigma_2$ lifts to the mapping class group of a genus $g$ surface with one marked point, where this point is taken to lie in the component of $\Sigma_g-c$ containing $d$. The pullback fibrations similarly have monodromies factoring through the mapping class group of $\Sigma_g$ with one marked point, so they all admit sections. Also note that all these fibrations are relatively minimal, as indicated by their monodromy factorizations. When $n=0$, we have the Lefschetz fibrations constructed in the proof of Theorem~\ref{mainthm1}. 

We first claim that $(X_n, f_n)$ is fiber sum indecomposable.  There is a surjective homomorphism $\MCG(\Sigma_g,d) \to \MCG(\Sigma_g - d)$ with kernel $\langle T_d \rangle$, where $\MCG(\Sigma_g,d)$ is the subgroup of $\MCG(\Sigma_g)$ consisting of elements that preserve the isotopy class of $d$; see \cite[Proposition 3.20]{FarbMargalit}.  Since $f_0(\pi_1(\Sigma_{2,2}))$ lies in $\MCG(\Sigma_g,d)$ and intersects $\langle T_d \rangle$ trivially, it follows from Theorem~\ref{mainthm1} that the composition
\[ \pi_1(\Sigma_{2,2}) \stackrel{f_0}{\to} \MCG(\Sigma_g,d) \to \MCG(\Sigma_g - d) \]
contains no simple separating loops in its kernel.  But this composition is equal to the composition
\[ \pi_1(\Sigma_{2,2}) \stackrel{f_n}{\to} \MCG(\Sigma_g,d) \to \MCG(\Sigma_g - d). \]
Thus, $f_n$ contains no simple separating loops in its kernel. By the Fiber Sum Criterion, $(X_n, f_n)$ is fiber sum indecomposable.

We next check that $\{ (X_n,f_n) \mid n \in \Z^+\}$ consists of pairwise homotopy inequivalent $4$-manifolds.  We will do this by showing that their first homology groups are distinct. Since $(X_n,f_n)$ admits a section, we can calculate $H_1(X_n)$ using the action of $\pi_1(\Sigma)$ on $H_1(F)$ via the formula
\[ H_1(X_n) \cong H_1(\Sigma) \oplus \left( H_1(F)/\pi_1(\Sigma) \right). \]
As usual, we compute the latter quotient by fixing generators for $\pi_1(\Sigma)$ and $H_1(F)$. For $H_1(F)$ let us 
fix a standard basis $a_1, b_1, \ldots, a_{g}, b_{g}$ where $a_1=d$.  

Almost all of the mapping classes in the monodromy factorization for $f_n$ lie in the Torelli group.  The only nontrivial action of $\pi_1(\Sigma)$ on $H_1(F)$ comes from the $T_d^n$ factor in the monodromy image of the fundamental group generator corresponding to the first component of the commutator 
\[  [T_d^n T_{\alpha_1^-}T_{\alpha_1^+}^{-1} , T_{\beta_1^-}T_{\beta_1^+}^{-1} ].\]
 Furthermore, $T_d^n$ acts nontrivially only on one basis element of $H_1(F)$, namely $[b_1]$. We therefore obtain
\[ 
[b_1]= T_d^n([b_1]) = [b_1]-n \, \hat \imath (b_1,d)[d] = [b_1]+n[d] ,
\]
that is, $n[d]=0$ in in $H_1(X_n)$.  This is the only relation among the generators of $H_1(\Sigma)$ and $H_1(F)$. Hence 
\[ H_1(X_n) \cong \Z/n\Z \oplus \left(\bigoplus \Z^{2g+2h-1}\right). \] 
It is now immediate that the family $\{ X_n \mid n \in \Z^+\}$ consists of \linebreak $4$-manifolds that are pairwise homotopy inequivalent. 

Since $(X_n, f_n)$ are fiber sum indecomposable, to show that $(X_n,f_n)$ are not fiber sums of holomorphic fibrations, it suffices to show that $X_n$ cannot admit a complex structure.  As seen from our homology calculation, $b_1(X_n)$ is odd, and $X_n$ admits a relatively minimal Lefschetz fibration with fiber genus greater than one and base genus positive. The first author proved that in this situation, $X_n$ cannot support  a complex structure with either orientation \cite[Lemma 2]{BaykurHolomorphic}, proving our claim, and hence the theorem.
\end{proof}

A \emph{maximal section} of a Lefschetz fibration or a surface bundle is a section that attains the maximum possible self-intersection number over all sections. It was shown by the first author, Korkmaz, and Monden \cite{BaykurKorkmazMonden} that the maximal possible self-intersection number for a section of a genus $g$ Lefschetz fibration over a genus $h$ surface is $2h-2$ for $h \geq 1$, whereas this number was known to be $-1$ when $h=0$, provided $g \geq 2$. For each pair of integers $g \geq 2$ and $h \geq 1$, the authors moreover constructed examples of genus $g$ Lefschetz fibrations over genus $h$ surfaces with maximal sections. It can be seen from the monodromy factorizations of these bundles that they are indeed fiber sum decomposable, and infinite families of these examples are present in \cite{BaykurIndecomposable}. This therefore shows that the strategy of the proof of the analogous statement of Theorem~\ref{mainthm2} in the case of $h=0$ given by Stipsicz \cite{StipsiczIndecomposable} and Smith \cite{SmithIndecomposable} breaks down when $h \geq 1$, since their proof relied on an observation that a Lefschetz fibration over the $2$-sphere admitting a maximal section is always fiber sum indecomposable.

\section{Construction of surface bundles} \label{Sec: SBconstruction}

The main goal of this section is to prove Theorem~\ref{mainthm3}. That is, we construct explicit fiber sum indecomposable surface bundles over surfaces with monodromy group contained in the Torelli group.  Then we generalize this construction to the case where the monodromy group lies in any given term of the Johnson filtration of the Torelli group; see Theorem~\ref{theorem:filtration} below.

In \cite{BaykurMargalit}, we constructed fiber sum indecomposable surface bundles over surfaces by giving explicit injective, irreducible homomorphisms
\[ \pi_1(\Sigma_h) \to \MCG(\Sigma_g) \]
for any $g,h \geq 2$ as follows.  First, we defined families of homomorphisms:
\begin{align*}
\Phi_{h,n} &: \pi_1(\Sigma_h) \to A(\overline C_n) \hspace{5.5ex}  h \geq 2, n \geq 5, \\
\Psi_{n,p} &: A(\overline C_{n}) \to \B_{n} \hspace{9ex} n \geq 3,p \geq 3, \text{ and} \\
\Omega_g &: \B_{2g+1} \to \MCG(\Sigma_g) \hspace{4ex} g \geq 1,
\end{align*}
where $A(\overline C_n)$ is the group with presentation
\[ \langle v_1, \dots, v_n \mid [v_i,v_j] \iff |i-j| > 1 \mod n \rangle \]
and $B_n$ is the braid group on $n$ strands.   We regard $B_n$ as the mapping class group of a disk with $n$ marked points.

Our monodromies were given by
\[ \Omega_g \circ \Psi_{2g+1,p} \circ \Phi_{h,2g+1} : \pi_1(\Sigma_h) \to \MCG(\Sigma_g) \]
for $p \geq 3$, $g \geq 2$, and $h \geq 2$. 

The salient features of the constituent maps are:
\begin{enumerate}
\item $\Phi_{h,n}$ is injective for all $h \geq 2, n \geq 5$
\item $\Psi_{n,p}$ is injective for $n \geq 0, p \geq 3$
\item $\ker(\Omega_g) \leqslant Z(\B_{2g+1})$
\end{enumerate}
Applying these facts, plus the fact that $\pi_1(\Sigma_h)$ is centerless when $h \geq 2$, it immediately follows that $\Omega_g \circ \Psi_{2g+1,p} \circ \Phi_{h,2g+1}$ is injective.  

\medskip

The strategy for the proof of Theorem~\ref{mainthm3} is to modify the above construction by replacing $\Psi_{2g+1,p}$ with a different map.  In order to do this, we need to recount the definition of $\Psi_{n,p}$.  Recall that if $c$ is a simple closed curve in $\D_n$ surrounding exactly two marked points, then the half-twist $H_c$ is the unique mapping class whose square is the Dehn twist $T_c$.  With these in hand, $\Psi_{n,p}$ is given by
\[ \Psi_{n,p}(v_i) = H_{c_i}^p, \]
where each $c_i$ is the simple closed curve in $\D_n$ indicated in the left-hand side of Figure~\ref{figure:newmap}.

\begin{proof}[Proof of Theorem~\ref{mainthm3}]

We modify the above construction by replacing $\Phi_{h,2g+1}$ with $\Phi_{h,g+1}$ and $\Psi_{2g+1,p}$ with a map 
\[ \Psi_{g+1,p}' : A(\overline C_{g+1}) \to \B_{2g+1} \]
so that:
\begin{enumerate}
\item $\Psi_{2g+1,p}'$ is injective, and
\item the image of $\Omega_{2g+1} \circ \Psi_{g+1,p}'$ lies in $\I(\Sigma_g)$.
\end{enumerate}
It then follows that the monodromy
\[ \Omega_{2g+1} \circ \Psi_{g+1,p}' \circ \Phi_{h,g+1} \]
is injective with image in the Torelli group; the theorem then follows from the Fiber Sum Criterion.  Note the group $A(\overline C_n)$ only contains $\pi_1(\Sigma_2)$ when $n \geq 5$; hence our assumption that $g \geq 4$.

The basic idea for constructing $\Psi_{g+1,p}'$ is simply to insert one extra marked point interior to each $c_i$; see Figure~\ref{figure:newmap}.

\begin{figure}
\labellist
\small\hair 2pt
\pinlabel {$c_1$} [] at 42 40 
\pinlabel {$c_2$} [] at 70 40 
\pinlabel {$c_3$} [] at 108 40 
\pinlabel {$c_4$} [] at 142 40 
\pinlabel {$c_5$} [] at 135 73 
\pinlabel {$c_1$} [] at 234 40 
\pinlabel {$c_2$} [] at 262 40 
\pinlabel {$c_3$} [] at 300 40 
\pinlabel {$c_4$} [] at 334 40 
\pinlabel {$c_5$} [] at 327 73 
\endlabellist
\centering \includegraphics[scale=.9]{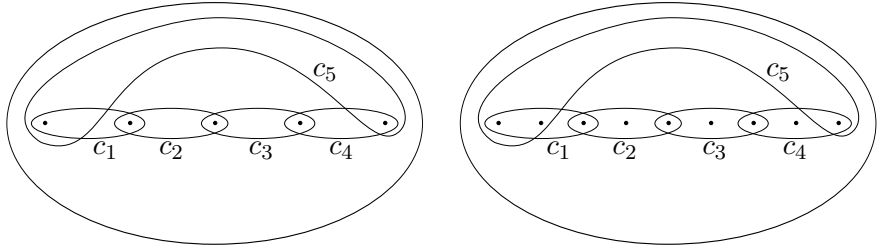}
\caption{The curves used to define $\Psi_{n,p}$ and $\Psi_{n,p}'$ in the case $n=5$.}
\label{figure:newmap}
\end{figure}

We define $\Psi_{n,p}' : A(\overline C_{n}) \to \B_{2n-1}$ by prescribing where it sends each generator $v_i$ for $A(\overline C_{n})$:
\[ v_i \mapsto T_{c_i}^{2p} \]
where the curves are as shown in the right-hand side of Figure~\ref{figure:newmap}.  We will now check that $\Psi_{n,p}'$ satisfies the properties listed above for $p \geq 1$.

First we show that $\Psi_{n,p}'$ is injective.  To this end, notice that the image of $\Psi_{n,p}'$ lies in the pure braid group $\PB_{2n-1}$. There is a forgetful homomorphism
\[ \PB_{2n-1} \to \PB_{n} \]
obtained by forgetting all of the even-numbered punctures (going from left to right).  The composition
\[ A(\overline C_{n}) \stackrel{\Psi_{n,p}'}{\to} \PB_{2n-1} \to \PB_{n} \leqslant \B_{n} \]
is nothing other than $\Psi_{n,4p}$, which is injective for $p \geq 1$.  It follows that $\Psi_{n,p}'$ is injective.

Next we show that $\Omega_{2g+1} \circ \Psi_{g+1}'$ lies in $\I(\Sigma_g)$.  Under the map $\Omega_g$, a square of a Dehn twist about an odd number of marked points in $\D_{2g+1}$ maps to a Dehn twist about a separating curve in $\Sigma_g$ \cite[Section 9.4.1]{FarbMargalit}.  But such Dehn twists lie in $\I(\Sigma_g)$, so we are done.
\end{proof}

As mentioned in the introduction, Theorem~\ref{mainthm3} is (at least for $g$ large enough) a special case of a more general phenomenon. Johnson defined \cite{JohnsonSurvey} a sequence of groups $\N_k(\Sigma_g)$ as follows.  The $k$th term is
\[ \N_k(\Sigma_g) = \ker(\MCG(\Sigma_g) \to \Out(\pi/\pi^{k+1})), \]
where $\pi = \pi_1(\Sigma_g)$, and $\pi^k$ is the $k$th term of the lower central series of $\pi$.  It follows from definitions that $\N_0(\Sigma_g) = \MCG(\Sigma_g)$ and that $\N_1(\Sigma_g) = \I(\Sigma_g)$.  It is a deep theorem of Johnson that $\N_2(\Sigma_g)$ is equal to $\K(\Sigma_g)$, the subgroup of $\MCG(\Sigma_g)$ generated by all Dehn twists about separating curves \cite{Johnson}.  The groups $\{\N_k(\Sigma_g)\}_{k \geq 1}$ form a central filtration of $\I(\Sigma_g)$, that is, successive quotients are abelian \cite{JohnsonSurvey}.  Also, it follows from work of Magnus \cite{Magnus} that these groups really do form a filtration:
\[ \bigcap_{k=0}^\infty \N_k(\Sigma_g) = 1. \]

We remark that the monodromy groups of the bundles we constructed in the proof of Theorem~\ref{mainthm3} are actually contained in $\N_2(\Sigma_g) = \K(\Sigma_g)$, which is an infinite index subgroup of $\I(\Sigma_g)$.

The following theorem says that we can construct surface bundles with monodromy group in any $\N_k(S_g)$.

\begin{theorem}
\label{theorem:filtration}
For $h \geq 2$, $g \geq 9$, and $k \geq 0$, there exist fiber sum indecomposable genus $g$ surface bundles over surfaces with monodromy in $\N_k(\Sigma_g)$.  
\end{theorem}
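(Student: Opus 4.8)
The plan is to mimic the proof of Theorem~\ref{mainthm3}, replacing the map $\Psi'_{g+1,p}$ by one whose image under $\Omega_{2g+1}$ lands in $\N_k(\Sigma_g)$ rather than merely in $\I(\Sigma_g)=\N_1(\Sigma_g)$. The key observation is that a Dehn twist $T_\delta$ about a separating curve $\delta$ cutting off a subsurface of genus $j$ lies in $\N_{2j-1}(\Sigma_g)$ (and not deeper): this is a standard fact about the Johnson filtration, originating in Johnson's work and recorded in the sources cited in the excerpt. Since $\Omega_g$ sends the square of a Dehn twist about an \emph{odd} number $2j+1$ of marked points in $\D_{2g+1}$ to a Dehn twist about a separating curve of genus $j$ in $\Sigma_g$, we can control which term of the Johnson filtration we land in by choosing how many marked points each curve $c_i$ encloses.

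Concretely, first I would fix $j$ with $2j-1 \geq k$, i.e. $j = \lceil (k+1)/2 \rceil$. Then, instead of inserting one extra marked point interior to each $c_i$ as in the proof of Theorem~\ref{mainthm3}, I would insert $2j-2$ extra marked points interior to each $c_i$, so that each $c_i$ now surrounds $2j$ of the original-plus-new marked points. To keep the parity right for $\Omega$, I would work in $\D_{2g+1}$ with the $c_i$ arranged so that each surrounds an odd number $2j+1$ of marked points; this requires $(g+1)(2j-1)+\,(\text{something})$ marked points, which is why the hypothesis $g \geq 9$ (allowing more room than the $g \geq 4$ of Theorem~\ref{mainthm3}) is needed. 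I would then define
\[
\Psi^{(k)}_{g+1,p} : A(\overline{C}_{g+1}) \to \B_{2g+1}, \qquad v_i \mapsto T_{c_i}^{\,2p},
\]
exactly as before but with these larger curves. Injectivity is proved the same way: the image lies in the pure braid group, and forgetting all the newly-inserted punctures gives a forgetful homomorphism $\PB_{N} \to \PB_{g+1}$ under which the composite is $\Psi_{g+1,4p}$ (or a similar power), which is known to be injective. Then $\Omega_{2g+1}\circ\Psi^{(k)}_{g+1,p}$ has image generated by squares of Dehn twists about genus-$j$ separating curves, hence lies in $\N_{2j-1}(\Sigma_g)\subseteq \N_k(\Sigma_g)$. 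The monodromy
\[
\Omega_{2g+1}\circ\Psi^{(k)}_{g+1,p}\circ\Phi_{h,g+1} : \pi_1(\Sigma_h)\to\MCG(\Sigma_g)
\]
is then injective with image in $\N_k(\Sigma_g)$, and fiber sum indecomposability follows from the injectivity together with the Fiber Sum Criterion (an injective monodromy has trivial kernel, so in particular contains no nontrivial separating simple closed curve).

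The main obstacle I anticipate is bookkeeping the marked points: one must verify that $2g+1$ marked points in $\D_{2g+1}$ genuinely suffice to host $g+1$ curves $c_i$ of the cyclic configuration $\overline{C}_{g+1}$, each enclosing $2j+1$ points, with the appropriate pattern of disjointness and single-point overlaps dictated by the relation $[v_i,v_j]$ for $|i-j|>1 \bmod (g+1)$ — and that this is compatible with the fixed identification $\B_{2g+1}\hookrightarrow \MCG(\Sigma_g)$ used in defining $\Omega_{2g+1}$. This is the step where the precise lower bound $g\geq 9$ (together with the constraint $j=\lceil(k+1)/2\rceil$) must be checked; I expect it reduces to an explicit picture, in the spirit of Figure~\ref{figure:newmap}, showing the enlarged curves, but confirming that the forgetful map collapses the construction to the already-understood $\Psi_{g+1,\cdot}$ requires care that the new punctures can be chosen to lie in the "obvious" positions for all $c_i$ simultaneously. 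A secondary point to nail down is the precise statement that $T_\delta\in\N_{2j-1}(\Sigma_g)$ for a genus-$j$ separating $\delta$; this should be quotable from Johnson's papers, but I would want to cite it carefully since the whole argument hinges on it.
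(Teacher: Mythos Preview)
Your ``key observation'' is false, and this is fatal to the approach. You assert that a Dehn twist $T_\delta$ about a separating curve cutting off a genus $j$ subsurface lies in $\N_{2j-1}(\Sigma_g)$. But in fact no Dehn twist lies in $\N_k(\Sigma_g)$ for $k \geq 3$; this is exactly the content of \cite[Theorem~A.1]{BBM}, quoted at the very end of the paper. All separating twists lie in $\N_2(\Sigma_g)=\K(\Sigma_g)$, and none lie deeper. So your construction, whose image in $\MCG(\Sigma_g)$ is generated by Dehn twists, can never land in $\N_k$ for $k\geq 3$, no matter how many marked points you enclose. (Even setting this aside, your curves would need to cut off genus $j\sim k/2$ subsurfaces of $\Sigma_g$, forcing $g$ to grow with $k$; the theorem requires the construction to work for \emph{all} $k$ once $g\geq 9$.)

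The paper's route is genuinely different and avoids this obstruction. It keeps $\Psi'_{g+1,p}$ exactly as in Theorem~\ref{mainthm3} and instead precomposes with a new map $\Xi_{g,k}:A(\overline C_{(g+1)/2})\to A(\overline C_{g+1})$ sending $v_i$ to the $k$-fold iterated commutator $c_k(v_{2i-1},v_{2i})$. Injectivity of $\Xi_{g,k}$ comes from Kim's co-contraction theorem. The point is that $\{\N_k(\Sigma_g)\}$ is a \emph{central} filtration of $\I(\Sigma_g)$, so the $k$th term of the lower central series $\I(\Sigma_g)^k$ lies in $\N_k(\Sigma_g)$; since $\Omega_{2g+1}\circ\Psi'_{g+1,p}$ already has image in $\I(\Sigma_g)$, pushing through $k$-fold commutators lands in $\N_k$. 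The bound $g\geq 9$ arises because $\Phi_{h,(g+1)/2}$ requires $(g+1)/2\geq 5$ --- it has nothing to do with fitting more punctures inside the $c_i$.
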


The existence of such bundles already appears in the unpublished work of Crisp and Farb \cite{CrispFarb}. Our main contribution is to give explicit bundles.  Without the explicitness, it is straightforward to use the same line of reasoning to obtain bundles under the same genus assumptions as Theorem~\ref{mainthm3}.

\begin{proof}[Proof of Theorem~\ref{theorem:filtration}]

For simplicity, we deal with the case $g$ odd.  The case of $g$ even can be handled similarly.  Also, as usual, we restrict to the case $h=2$; the case of $h > 2$ is handled by pulling back via any covering map $\Sigma_h \to \Sigma_2$.

We will again modify the previous constructions by introducing a new map
\[ \Xi_{g,k} : A(\overline C_{(g+1)/2}) \to A(\overline C_{g+1}).\]
To define the map, we need some notation.  For two elements $g_1,g_2$ of a group, define group elements $c_i(g_1,g_2)$ as follows:
\[ c_1(g_1,g_2) = g_2 \qquad c_{i+1}(g_1,g_2) = [g_1,c_i]. \]
By design, $c_k(g_1,g_2)$ is contained in the $k$th term of the lower central series of the ambient group.  We now define
\[ \Xi_{g,k}(v_i) = c_k(v_{2i-1},v_{2i}). \]
This map is injective for $k \geq 2$ by Kim's co-contraction theorem; see \cite[Theorem 2]{BaykurMargalit} or \cite[Remark 5.2]{Kim}.

Our monodromies will be given by:
\[ \Omega_{2g+1} \circ \Psi_{g+1,p}' \circ \Xi_{g,k} \circ \Phi_{h,(g+1)/2}. \]
Again, $\Phi_{h,(g+1)/2}$ is only defined for $(g+1)/2 \geq 5$, hence the assumption $g \geq 9$.  Since the new map $\Xi_{g,k}$ is injective, it follows that this monodromy is injective.  It remains to show that the image of the composition lies in $\N_k(\Sigma_g)$.  Indeed, since the image of $\Omega_{2g+1} \circ \Psi_{g+1,p}'$ lies in $\I(\Sigma_g)$, and the image of $\Xi_{g,k}$ lies in the $k$th term of the lower central series of $A(\overline C_{g+1})$, it follows that the image of $\Omega_{2g+1} \circ \Psi_{g+1,p}' \circ \Xi_{g,k}$ lies in $\I(\Sigma_g)^k$, the $k$th term of the lower central series for $\I(\Sigma_g)$.  But $\I(\Sigma_g)^k$ is contained in $\N_k(\Sigma_g)$, as the $k$th term of any central series for $\I(\Sigma_g)$ contains $\I(\Sigma_g)^k$.  This completes the proof.
\end{proof}

We remark that the surface bundles we construct in the proof of Theorems~\ref{mainthm3} and~\ref{theorem:filtration} are in fact section sum indecomposable as in \cite{BaykurMargalit}. The proof is essentially the same as for the examples given in that paper, namely, it suffices to show that the monodromy groups are irreducible.  The key point is that the curves $c_i$ in $\D_{2g+1}$ are still filling in the new construction, and so we can again use Penner's criterion for irreducibility in the mapping class group.

We also note that our constructions in the proofs of Theorems~\ref{mainthm3} and~\ref{theorem:filtration} can be easily modified in order to obtain infinitely many such bundles, distinct up to fiberwise diffeomorphism.

Finally, one might wonder if there are Lefschetz fibrations with monodromy in $\N_k(\Sigma_g)$ for $k \geq 2$. However, when $k \geq 3$, the group $\N_k(\Sigma_g)$ contains no Dehn twists (or even multitwists) \cite[Theorem A.1]{BBM}, so there are no such fibrations.


\bibliographystyle{plain}
\bibliography{LefschetzTorelli}

\end{document}